\numberwithin{equation}{section}
\theoremstyle{plain}
\newtheorem{theorem}{Theorem}
\newtheorem{definition}[theorem]{Definition}
\newtheorem{lemma}[theorem]{Lemma}
\newtheorem{cor}[theorem]{Corollary}
\newtheorem{remark}[theorem]{Remark}
\newtheorem{example}[theorem]{Example}
\def\inv{\mathrm{inv}}
\def\DES{\mathrm{DES}}
\def\EXC{\mathrm{EXC}}
\def\max{\mathrm{max}}
\def\min{\mathrm{min}}
\def\exc{\mathrm{exc}}
\def\dd{\mathrm{dd}}
\def\des{\mathrm{des}}
\def\S{\mathfrak{S}}
\def\L{\mathfrak{L}}
\def\area{\mathrm{area}}
\def\ppp{\operatorname{(31--2)}}
\def\pp{\operatorname{(2--13)}}
\def\val{\operatorname{{\bf val}}}
\def\pos{\operatorname{{\bf pos}}}
\def\area{\operatorname{{area}}}
\def\Nes{\operatorname{{\bf nes}}}
\DeclareMathOperator\cros{cros}
\DeclareMathOperator\nest{nest}
\def\boxit#1{\leavevmode\hbox{\vrule\vtop{\vbox{\kern.33333pt\hrule
    \kern1pt\hbox{\kern1pt\vbox{#1}\kern1pt}}\kern1pt\hrule}\vrule}}
\begin{document}

\title[A new encoding of permutations]
{A new encoding of permutations by \\Laguerre histories}

\author[S.H.F. Yan]{Sherry H.F. Yan}
\address[Sherry H.F.  Yan]{Department of Mathematics,
Zhejiang Normal University, Jinhua 321004, P.R. China}
\email{huifangyan@hotmail.com}

\author[H. Zhou]{Hao Zhou}
\address[Hao Zhou]{Department of Mathematics,
Zhejiang Normal University, Jinhua 321004, P.R. China}

\author[Z. Lin]{Zhicong Lin}
\address[Zhicong Lin]{School of Science, Jimei University, Xiamen 361021, P.R. China}
\email{zhicong.lin@univie.ac.at}

\date{\today}

\begin{abstract}
We construct  a bijection from permutations to some weighted Motzkin paths known as Laguerre histories. As one application of our bijection,  a neat  $q$-$\gamma$-positivity expansion of the $(\inv,\exc)$-$q$-Eulerian polynomials is obtained. 
\end{abstract}


\keywords{Laguerre histories; inversions; excedances; Eulerian polynomials; $\gamma$-positivity}

\maketitle


  \section{Introduction}
 A {\em Motzkin path} of length $n$ is a lattice path in the first  quadrant starting from $(0,0)$, ending at $(n,0)$, with three possible steps: $U=(1,1)$ (up step), $L=(1,0)$ (level step) and $D=(1,-1)$ (down step). A {\em$2$-Motzkin path} is a Motzkin path in which each level step is labelled by $L_0$ or $L_1$. The $2$-Motzkin paths will be represented as words over the alphabet $\{U,D,L_0,L_1\}$. A {\em Laguerre history} of length $n$ is a pair $(w,\mu)$ such that $w=w_1\cdots w_n$ is a $2$-Motzkin path and $\mu=(\mu_1,\cdots,\mu_n)$ is a vector satisfying $0\leq\mu_i\leq h_i(w)$, where 
 $$h_i(w):=\#\{j\mid j<i, w_j=U\}-\# \{j\mid j<i, w_j=D\}$$
  is the {\em height} of the $i$-th step of $w$. Denote by $\L_n$ the set of all Laguerre histories of length $n$. It is known that the cardinality of $\L_n$ is $(n+1)!$.
  
Laguerre histories can be used to encode permutations. Two famous such encodings  in the literature are known as the Fran\c{c}on--Viennot bijection and the Foata--Zeilberger bijection; see~\cite{csz} for the relationship between these two bijections and~\cite{csz,Shin-zeng1,Shin-zeng2012} for other modifications of them. The purpose of this paper is to present a new encoding of permutations by Laguerre histories with an interesting application to the $q$-$\gamma$-positivity expansion of the $(\inv,\exc)$-$q$-Eulerian polynomials. The inspiration of our bijection comes from the recent works by Cheng--Elizalde--Kasraoui--Sagan~\cite{ceks}, Lin~\cite{Lin1} and Lin--Fu~\cite{lin-fu}.
We need some further  definitions and notations before we can state our main results. 

Let $\S_n$ be the set of permutations of $[n]:=\{1,2,\ldots, n\}$. For any permutation $\sigma\in \S_n$, written as the word $\sigma=\sigma(1)\sigma(2)\cdots \sigma(n)$, the entry $i\in [n]$ is called an {\em excedance} (resp.~\emph{descent}, {\em double descent}) of $\sigma$ if $i<\sigma(i)$ (resp. $\sigma(i)>\sigma(i+1)$, $\sigma(i-1)>\sigma(i)>\sigma(i+1)$). Here we use the convention $\sigma(0)=\sigma(n+1)=0$ when considering double descents of $\sigma$. Denote by $\exc(\sigma)$, $\des(\sigma)$ and $\dd(\sigma)$ the numbers of excedances, descents and double descents of $\sigma$, respectively. 
It is well known that the  {\em Eulerian polynomials} $A_n(t)$ has the interpretations (cf.~\cite[Sec.~1.3]{st0}):
 \begin{equation}\label{eqe}
 A_n(t)=\sum_{\sigma\in \S_n}q^{\des(\sigma)}=\sum_{\sigma\in \S_n}q^{\exc(\sigma)}.
\end{equation}
Foata and  Sch\"uzenberger~\cite[Theorem~5.6]{fsc0} proved the following elegant $\gamma$-positivity expansion of the Eulerian polynomials
\begin{equation}\label{gam:eul}
A_n(t)=\sum_{k=0}^{\lfloor \frac{n-1}{2}\rfloor}|DD_{n,k}|t^k(1+t)^{n-1-2k},
\end{equation}
where 
$DD_{n,k}:=\{\sigma\in \S_n: \des(\sigma)=k, \dd(\sigma)=0\}$. Recently, different refinements of~\eqref{gam:eul} and other $\gamma$-positive polynomials arising in enumerative and geometric combinatorics have been widely studied; the reader is referred to the survey of Athanasiadis~\cite{Ath} and the book exposition by Petersen~\cite{Petersen2015} for more information.

For $\sigma\in \S_n$, let $\inv(\sigma):=|\{(i,j)\in[n]\times[n]: i<j, \sigma(i)>\sigma(j)\}|$ be the {\em inversion number} of $\sigma$.  The statistic $(31$--$2)\sigma$ (resp.~$(2$--$13)\sigma$) is the number of pairs $(i,j)$ such that $2\leq i<j\leq n$ and $\sigma(i-1)>\sigma(j)>\sigma(i)$ (resp.~$\sigma(j)>\sigma(i)>\sigma(j-1)$). Shin and Zeng~\cite[Theorem~1]{Shin-zeng2016} proved the following $q$-analog of~\eqref{gam:eul} for the $(\inv,\exc)$-$q$-Eulerian polynomials. 

\begin{theorem}[Shin and Zeng]\label{coro4}
For $n\geq 1$, we have 
$$
\sum_{\sigma\in \S_n}q^{\inv(\sigma)-\exc(\sigma)}t^{\exc(\sigma)}=\sum_{k=0}^{\lfloor{n-1\over 2}\rfloor}\biggl(\sum_{\sigma\in DD_{n,k}}q^{2\,(2-13)\sigma+(31-2)\sigma}\biggr)t^{k}(1+t)^{n-1-2k}.
$$
\end{theorem}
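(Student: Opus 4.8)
The plan is to deduce Theorem~\ref{coro4} from the new encoding of permutations by Laguerre histories that is the main construction of this paper. Concretely, I would first establish that the bijection $\Phi\colon \S_n \to \L_n$ transports the pair of statistics $(\inv, \exc)$ on permutations to an explicit pair of statistics on Laguerre histories, counting down steps (weighted by a factor related to $\exc$) and accumulating the $\mu$-labels together with the heights to recover $\inv$. That is, I would show $\exc(\sigma)$ equals the number of $D$ steps (or some equivalent height-related quantity) of the path $w$ in $\Phi(\sigma) = (w,\mu)$, and that $\inv(\sigma) - \exc(\sigma)$ equals $\sum_i \mu_i$ plus a correction term coming from the $L_1$ level steps, i.e. a ``total charge'' statistic on $(w,\mu)$. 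The point of introducing this particular encoding (as opposed to Fran\c{c}on--Viennot or Foata--Zeilberger) is precisely that it should make $2\,(2\text{--}13)\sigma + (31\text{--}2)\sigma$ a manifestly natural statistic on the history side.

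Second, on the Laguerre history side I would set up a sign-reversing-type involution, or rather a weight-preserving decomposition, that implements the $(1+t)$ factors. Following the classical Foata--Sch\"utzenberger strategy behind~\eqref{gam:eul}, the key is to identify the histories that correspond to double-descent-free permutations as exactly those $(w,\mu)$ avoiding some local pattern in $w$ (e.g.\ no $UD$ factor, or no occurrence of two specified consecutive step types at a given height), and then to show that a general history is obtained from such a ``reduced'' history by a sequence of independent binary choices, each contributing a factor $(1+t)$ and not affecting the $q$-weight. Equivalently, I would define a contraction map that, from an arbitrary $(w,\mu)\in\L_n$, deletes the ``removable'' steps to land in the reduced set $\Phi(DD_{n,k})$; the fibers of this map have size $2^{n-1-2k}$, are $t$-graded by $t^{\exc}$ as $t^k(1+t)^{n-1-2k}$, and are $q$-neutral. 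Combining this fiber decomposition with the statistic-transport from the first step yields the claimed formula, since on the reduced histories the residual weight $q^{\cdots}$ pulls back under $\Phi$ to $q^{2\,(2\text{--}13)\sigma + (31\text{--}2)\sigma}$ on $DD_{n,k}$.

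The main obstacle I anticipate is the bookkeeping in the first step: verifying that $\inv - \exc$ is carried to the correct additive statistic on histories, and in particular pinning down why the vicinal statistics $(2\text{--}13)$ and $(31\text{--}2)$ appear with the asymmetric coefficients $2$ and $1$. This asymmetry strongly suggests that $\inv$ decomposes along the encoding into contributions of three types -- from the $\mu$-labels of $U$-type steps, from $D$-type steps, and from the two colors of level steps -- with the $(2\text{--}13)$ pairs each being recorded \emph{twice} (once from below, once from above, reflecting that such a pair straddles both an excedance-type and a non-excedance-type position) while $(31\text{--}2)$ pairs are recorded once. Making this precise will require a careful, case-by-case analysis of how the bijection reads off each step and its label from the two-line or cycle form of $\sigma$, and matching each inversion of $\sigma$ to exactly one unit of the height/label weight on the path. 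Once that correspondence is nailed down, the restriction to $\dd(\sigma)=0$ and the $(1+t)$-expansion should follow along the now-standard lines recalled above.
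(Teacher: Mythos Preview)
Your plan has a genuine gap: it conflates two different bijections. The new encoding $\Phi$ does transport $(\exc,\inv-\exc)$ to the pair $(\#U(w)+\#L_1(w),\,\area(w)+\sum_i\mu_i)$ on $\L_{n-1}$, and the $\Z_2^{n-1}$-action flipping $L_0\leftrightarrow L_1$ does produce the $(1+t)^{n-1-2k}$ factors exactly as you outline. But when you pull the reduced histories (those with no $L_1$ steps) back through $\Phi$, you do \emph{not} land in $DD_{n,k}$ with residual weight $q^{2\,(2\text{--}13)\sigma+(31\text{--}2)\sigma}$. You land in the set $DE_{n,k}$ of permutations with no \emph{shifted double excedances}, and the residual weight is $q^{\inv(\sigma)-\exc(\sigma)}$. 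That route proves Theorem~\ref{gam:main}, not Theorem~\ref{coro4}.

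The pattern statistics $2\,(2\text{--}13)+(31\text{--}2)$ and the set $DD_{n,k}$ are not visible through $\Phi$ at all; they arise from a \emph{second} bijection, a modified Fran\c{c}on--Viennot map $\Psi:\S_n\to\L_{n-1}$, which sends $(\des,\,2\,(2\text{--}13)+(31\text{--}2))$ to the \emph{same} pair $(\#U(w)+\#L_1(w),\,\area(w)+\sum_i\mu_i)$. The paper's proof of Theorem~\ref{coro4} uses both maps: $\Psi$ together with the group action gives the $\gamma$-expansion
\[
\sum_{\sigma\in\S_n}t^{\des(\sigma)}q^{2\,(2\text{--}13)\sigma+(31\text{--}2)\sigma}
=\sum_k\Bigl(\sum_{\sigma\in DD_{n,k}}q^{2\,(2\text{--}13)\sigma+(31\text{--}2)\sigma}\Bigr)t^k(1+t)^{n-1-2k},
\]
while composing $\Psi^{-1}\circ\Phi$ yields the equidistribution $(\exc,\inv-\exc)\sim(\des,\,2\,(2\text{--}13)+(31\text{--}2))$ on $\S_n$. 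Combining these two facts gives Theorem~\ref{coro4}. In particular, the asymmetric coefficients $2$ and $1$ come from the Fran\c{c}on--Viennot side (where $\mu_i=(2\text{--}13)_i\sigma$ and $h_i=(2\text{--}13)_i\sigma+(31\text{--}2)_i\sigma$), not from any decomposition of inversions under $\Phi$, so the case analysis you anticipate in your last paragraph is aimed at the wrong map.
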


For a $2$-Motzkin path $w=w_1\cdots w_n$ of length $n$, define 
\begin{align*}
U(w):=\{i\in[n-1]: w_i=U\}\quad\text{and}\quad L_1(w):=\{i\in[n]: w_i=L_1\}.
\end{align*}
Let $\area(w)$ be the {\em area} between $w$ and the $x$-axis. Our new encoding of permutations by Laguerre histories is a generalization of the bijection in~\cite[Lemma~16]{lin-fu} between $321$-avoiding permutations and $2$-Motzkin paths. 

\begin{theorem}\label{thm:main}
There is a bijection $\Phi: \S_n\rightarrow\L_{n-1}$ such that if $\Phi(\sigma)=(w,\mu)$, then 
\begin{equation}\label{exc:inv}
\EXC(\sigma)=U(w)\uplus L_1(w)\quad\text{and}\quad\inv(\sigma)-\exc(\sigma)=\area(w)+\sum_{i=1}^{n-1}\mu_i,
\end{equation}
where $\EXC(\sigma)$ is the set of excedances of $\sigma$.
\end{theorem}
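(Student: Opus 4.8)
The plan is to construct $\Phi$ recursively on $n$, inserting the largest letter $n$ into a permutation of $[n-1]$ and reading off what happens to the Laguerre history. However, a cleaner route—and the one I would actually pursue—exploits the hinted connection with the Lin--Fu bijection between $321$-avoiding permutations and $2$-Motzkin paths \cite[Lemma~16]{lin-fu}. First I would recall that construction: for a $321$-avoiding $\sigma$, the step $w_i$ of the associated $2$-Motzkin path records the local behavior of $\sigma$ at position $i+1$ (whether $i+1$ and/or its image cross the ``staircase'' $j\mapsto j$), with the excedance bottoms producing $U$ and $L_1$ steps exactly as in \eqref{exc:inv}, and with $\area$ computing $\inv-\exc$. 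The new ingredient is that a general permutation is obtained from a $321$-avoiding one by specifying, for each position, how deeply a letter is ``nested'' among the available slots at that height; this extra data is precisely the vector $\mu=(\mu_1,\dots,\mu_{n-1})$ with $0\le\mu_i\le h_i(w)$. So $\Phi$ should send $\sigma$ to the pair $(w,\mu)$ where $w$ is the $2$-Motzkin path encoding the ``excedance word'' of $\sigma$ (the sequence telling whether each $i\in[n]$ is an excedance, a non-excedance top, a fixed point, etc., refined by the $L_0/L_1$ distinction coming from the $(31$--$2)$ versus $(2$--$13)$ type of the position), and $\mu_i$ counts a suitable inversion-type statistic localized at position $i+1$, normalized so that $h_i(w)$ is its maximum.

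The concrete definition I would adopt: scanning $i$ from $2$ to $n$, classify position $i$ by comparing $\sigma(i)$ to $i$ and to its neighbors, set the step $w_{i-1}$ accordingly ($U$ if $i$ is an excedance bottom of a certain type, $D$ if it is an excedance top, $L_1$ if it is an ``excedance-like'' level, $L_0$ otherwise), and let $\mu_{i-1}$ be the number of $j$ with $j<i$ that are ``open'' at step $i-1$ (in the sense of the Motzkin-path pairing) and satisfy $\sigma(j)>\sigma(i)$ (or the symmetric condition, depending on the step type). The height $h_{i-1}(w)$ equals the number of currently open positions, giving the bound $0\le\mu_{i-1}\le h_{i-1}(w)$ for free. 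Then I would verify the two identities in \eqref{exc:inv}: the claim $\EXC(\sigma)=U(w)\uplus L_1(w)$ is immediate from how $w$ is defined, and the inversion identity follows by summing the local contributions—each inversion $(i,j)$ with $\sigma(i)>\sigma(j)$ is counted either by the ``area under the step'' (when it is a ``structural'' crossing inversion forced by the excedance pattern) or by the $\mu$-value at the relevant position (when it is an ``extra'' nesting inversion), and one checks these are disjoint and exhaustive, with $\exc(\sigma)$ exactly the overcount $\sum_{i}[w_i\in\{U,L_1\}]$ that gets subtracted.

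To prove $\Phi$ is a bijection, the natural strategy is to exhibit the inverse directly: given $(w,\mu)\in\L_{n-1}$, reconstruct $\sigma$ by processing positions left to right, using $w$ to decide whether position $i$ creates a new excedance/non-excedance pair or closes an existing one, and using $\mu_{i-1}$ to select which of the $h_{i-1}(w)+1$ available values to assign. One then shows $\Phi\circ\Phi^{-1}=\mathrm{id}$ and $\Phi^{-1}\circ\Phi=\mathrm{id}$ by induction on $n$, or alternatively invokes the cardinality equality $|\S_n|=(n+1)!=|\L_{n-1}|$ together with injectivity of $\Phi$, which is easier to establish since the reconstruction is forced step by step. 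I expect the main obstacle to be pinning down the correct bookkeeping for the $L_0$ versus $L_1$ distinction and making the ``open positions'' formalism match the abstract height $h_i(w)$ exactly—that is, proving that the set of positions available for a nesting choice at step $i-1$ has cardinality precisely $h_{i-1}(w)+1$ regardless of the prior choices, so that $\mu$ ranges over the full box $\prod_i\{0,1,\dots,h_i(w)\}$; once that invariant is established, both \eqref{exc:inv} and the bijectivity fall out by a routine induction.
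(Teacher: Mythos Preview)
Your sketch has the right overall architecture—encode the excedance structure as a $2$-Motzkin path and the remaining freedom as a weight vector bounded by the heights—but the concrete definitions you propose do not match the ones that make the argument work, and the gap is not just bookkeeping.

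First, the step $w_i$ is \emph{not} determined by comparing $\sigma(i)$ to its neighbours, nor does the $L_0/L_1$ distinction come from a $(31\text{--}2)$ versus $(2\text{--}13)$ pattern type. The paper's classification uses two independent bits: $p_i=\chi(\sigma(i)>i)$ (is $i$ an excedance \emph{position}?) and $v_{i+1}=\chi(\sigma^{-1}(i+1)<i+1)$ (is $i+1$ an excedance \emph{value}?). One sets $w_i=U,D,L_1,L_0$ according to $(p_i,v_{i+1})=(1,0),(0,1),(1,1),(0,0)$. The appearance of $\sigma^{-1}$ is essential: without it you cannot guarantee that the resulting word is a lattice path, because the balance condition $h_i(w)\ge 0$ is exactly the statement that among $\{1,\ldots,i\}$ there are at least as many non-excedance values as non-excedance positions. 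A neighbour-based rule of Fran\c con--Viennot type would produce a different path (indeed, that is the bijection $\Psi$ used later for $\des$), and the first identity in \eqref{exc:inv} would fail set-wise.

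Second, your proposed $\mu_{i-1}$ (``open $j<i$ with $\sigma(j)>\sigma(i)$'') is not the right statistic. The paper sets $\mu_i=\nest_i(\sigma)$, the nesting index, and proves the key local identity $h_i(w)=\cros_i(\sigma)+\nest_i(\sigma)$; this simultaneously gives the bound $0\le\mu_i\le h_i(w)$ and, summed over $i$ and combined with the Shin--Zeng formula $\inv(\sigma)=\exc(\sigma)+\cros(\sigma)+2\nest(\sigma)$, yields $\inv-\exc=\area(w)+\sum_i\mu_i$ immediately. Your plan to partition inversions directly into ``structural/area'' and ``extra/$\mu$'' classes is plausible-sounding but does not go through cleanly: each nesting contributes \emph{twice} (once to $\area$ via $h_i$ and once to $\sum\mu_i$), so the two classes are not disjoint, and getting the double count right without the crossing/nesting decomposition is exactly the hard part you are glossing over.

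Finally, bijectivity is not just a left-to-right reconstruction; the paper isolates it as a separate lemma characterizing exactly which triples $(\val,\pos,\Nes)$ arise from permutations, via the inequality $0\le\mu_k\le d_k(v,p)-1$, and then builds $\sigma$ by two passes (smallest-first on non-excedance positions, largest-first on excedance positions). Your single left-to-right sweep would have to assign both excedance and non-excedance values simultaneously, and it is not clear your ``$h_{i-1}(w)+1$ available values'' count is correct in that setting.
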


An index $i\in[n-1]$ is called a {\em shifted double excedance} of $\sigma$ if $i<\sigma(i)$ and $\sigma^{-1}(i+1)<i+1$. Let $DE_{n,k}$ be the set of permutations $\sigma\in\S_n$ with
\begin{itemize}
\item no shifted double excedances,
\item $\exc(\sigma)=k$.
\end{itemize}
For example, for $n=4$, we have 
\begin{equation}\label{DE4}
DE_{4,0}=\{1234\}\quad\text{and}\quad DE_{4,1}=\{1423,1432,3124,3214,4123,4132,4213,4231\}.
\end{equation}
As one application of our encoding $\Phi$, the following neat $q$-$\gamma$-positivity expansion, different with that in Theorem~\ref{coro4}, for the $(\inv,\exc)$-$q$-Eulerian polynomials is derived.
 
\begin{theorem}\label{gam:main}
For $n\geq 1$, we have
$$
\sum_{\sigma\in \S_n}q^{\inv(\sigma)-\exc(\sigma)}t^{\exc(\sigma)}=\sum_{k=0}^{\lfloor{n-1\over 2}\rfloor}\biggl(\sum_{\sigma\in DE_{n,k}}q^{\inv(\sigma)-\exc(\sigma)}\biggr)t^{k}(1+t)^{n-1-2k}.
$$
\end{theorem}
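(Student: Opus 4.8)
The plan is to use the bijection $\Phi\colon\S_n\to\L_{n-1}$ from Theorem~\ref{thm:main} to transport the sum on the left-hand side into a sum over Laguerre histories, and then to carry out a ``valley-hopping''-type involution on the $\L_{n-1}$ side that groups histories into classes of size $2^{n-1-2k}$, each contributing a factor $t^k(1+t)^{n-1-2k}$. Concretely, by Theorem~\ref{thm:main} we have
\begin{equation*}
\sum_{\sigma\in\S_n}q^{\inv(\sigma)-\exc(\sigma)}t^{\exc(\sigma)}=\sum_{(w,\mu)\in\L_{n-1}}q^{\area(w)+\sum_i\mu_i}\,t^{|U(w)|+|L_1(w)|},
\end{equation*}
since $\exc(\sigma)=|U(w)\uplus L_1(w)|=|U(w)|+|L_1(w)|$. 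The first step is therefore to identify, among the four step-types $\{U,D,L_0,L_1\}$, which ones can be ``toggled'' without changing $q$-weight: a level step $L_0$ at height $h$ and a level step $L_1$ at the same height $h$ differ in the exponent of $t$ by exactly one and (crucially) have the same contribution to $\area(w)$, so toggling $L_0\leftrightarrow L_1$ at a fixed position is the elementary $(1+t)$-producing move — provided the $\mu$-vector is unaffected, which it is since the height profile $h_i(w)$ is unchanged by relabelling a level step.

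The second step is to pin down exactly which positions are ``free'' to toggle and which are ``frozen.'' A position carrying an $L_0$ or $L_1$ should be free; a position carrying a $U$ or $D$ should be frozen, but the $U$'s and $D$'s must be paired up so that each free coordinate genuinely doubles the class. The expected combinatorial bookkeeping: in a $2$-Motzkin path of length $n-1$ with $u$ up steps (hence $u$ down steps) and $\ell$ level steps, $u+u+\ell=n-1$, and the ``$t$-degree'' coming from the path is $u+|L_1(w)|$ where $|L_1(w)|$ ranges over $0,\dots,\ell$. Fixing the underlying height-sequence (i.e.\ the positions of $U,D$ and the positions of level steps, but not their labels) and fixing $\mu$, the level-label choices give exactly $\sum_{j=0}^{\ell}\binom{\ell}{j}t^{u+j}=t^u(1+t)^\ell$. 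So each ``skeleton'' (a Laguerre history of length $n-1$ whose $\ell$ level steps are all declared to be, say, $L_0$) contributes $q^{\area+\sum\mu_i}t^{u}(1+t)^{\ell}$, and since $n-1-2k=\ell$ precisely when $u=k$ (as $\exc=u+j$ forces $k=u$ for the skeleton with all $L_0$'s, i.e.\ $j=0$), the skeletons with $u=k$ up steps are exactly those contributing $t^k(1+t)^{n-1-2k}$.

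The third step is to show that $\Phi$ maps $DE_{n,k}$ bijectively onto the set of these skeletons — that is, onto $\{(w,\mu)\in\L_{n-1}: L_1(w)=\varnothing,\ |U(w)|=k\}$ — with the $q$-statistic matching: $\inv(\sigma)-\exc(\sigma)=\area(w)+\sum_i\mu_i$, again by Theorem~\ref{thm:main}. This is where the hypothesis ``no shifted double excedances'' must be decoded through $\Phi$. The excedance set of $\sigma$ is $U(w)\uplus L_1(w)$; the condition that $i$ is a shifted double excedance ($i<\sigma(i)$ and $\sigma^{-1}(i+1)<i+1$) should translate, under $\Phi$, into the statement that step $i$ of $w$ is an excedance-type step ($U$ or $L_1$) and step $i+1$ is ``non-excedance on the preimage side''; forbidding all such $i$ should force every level step to be $L_0$ (killing $L_1$) while leaving the $U/D$-pattern and $\mu$ unconstrained. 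I would verify this translation by examining $\Phi$ step-by-step on its recursive/insertion definition (whichever the authors use to prove Theorem~\ref{thm:main}), and sanity-check against the data in~\eqref{DE4}: for $n=4$, $DE_{4,0}=\{1234\}$ must map to the unique length-$3$ history with all level steps $L_0$ and no up steps, namely $w=L_0L_0L_0$, $\mu=(0,0,0)$, $q$-weight $1$; and $DE_{4,1}$ (eight permutations) must map onto the eight length-$3$ histories with exactly one $U$, one $D$, one $L_0$, and $L_1(w)=\varnothing$, with the right $q$-weights — one checks the eight arrangements $UL_0D$, $UDL_0$, $L_0UD$ together with the allowed $\mu$-values ($\mu_i\in\{0,1\}$ whenever step $i$ sits at height $1$) produce the polynomial $\sum_{\sigma\in DE_{4,1}}q^{\inv-\exc}$.

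The main obstacle will be the third step: making precise the dictionary between the purely statistical condition ``no shifted double excedances'' and the structural condition ``$L_1(w)=\varnothing$'' on the image. Everything else — the $(1+t)^{\ell}$ bookkeeping and the $q$-weight invariance under label-toggling — is routine once the bijection's compatibility with heights is in hand (it is, by Theorem~\ref{thm:main}, because $\mu_i\le h_i(w)$ and $h_i(w)$ ignores level-step labels). So I would spend the bulk of the argument establishing: (i) $\EXC(\sigma)\ni i$ with $i$ a shifted double excedance $\iff$ $w_i\in\{U,L_1\}$ and $w_{i+1}$ is of a specified type making $i$ ``bad,'' and (ii) that as $L_1(w)$ ranges over all subsets of the level-step positions of a fixed skeleton (with $\mu$ fixed), the resulting $\sigma$'s fill out a full coset of size $2^{\ell}$ inside $\S_n$ whose sole no-shifted-double-excedance representative is the all-$L_0$ one — equivalently, that toggling a level step from $L_0$ to $L_1$ at height $h$ adds (or removes) exactly one shifted double excedance and changes neither $\inv-\exc$ nor $\exc$ modulo the $t$-degree shift. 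With (i) and (ii) in place the theorem follows by summing $q^{\area(w)+\sum\mu_i}t^{|U(w)|}(1+t)^{\ell}$ over skeletons with $|U(w)|=k$ and re-identifying each skeleton sum with $\sum_{\sigma\in DE_{n,k}}q^{\inv(\sigma)-\exc(\sigma)}$.
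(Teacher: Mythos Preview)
Your approach is correct and is exactly the one the paper takes: transport via $\Phi$, toggle $L_0\leftrightarrow L_1$ on level steps (the paper phrases this as a $\mathbb{Z}_2^{n-1}$-action on $\L_{n-1}$), and identify the orbit representatives with all-$L_0$ level steps as $\Phi(DE_{n,k})$.

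The one place where you overcomplicate matters is your ``main obstacle.'' No analysis of $w_{i+1}$ or of a recursive description of $\Phi$ is needed: by the paper's explicit definition of $\Phi$, one has $w_i=L_1$ if and only if $p_i=v_{i+1}=1$, i.e.\ $\sigma(i)>i$ and $\sigma^{-1}(i+1)<i+1$, which is precisely the statement that $i$ is a shifted double excedance. Thus ``$\sigma$ has no shifted double excedances'' is literally synonymous with ``$L_1(w)=\varnothing$,'' and your step~(iii) is a one-line observation rather than the bulk of the argument.
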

As an example of Theorem~\ref{gam:main}, for $n=4$, it follows from~\eqref{DE4} that 
\begin{equation*}
\sum_{\sigma\in \S_4}q^{\inv(\sigma)-\exc(\sigma)}t^{\exc(\sigma)}=(1+t)^3+(2q+3q^2+2q^3+q^4)t(1+t).
\end{equation*} 

We will also provide an alternative approach to Theorem~\ref{coro4} by combining our bijection $\Phi$ and a modified version of the Fran\c{c}on--Viennot bijection. Denote by $\mathfrak{A}_n$  the set of permutations $\sigma\in\S_n$ that is down-up:
$$
\sigma(1)>\sigma(2)<\sigma(3)>\sigma(4)<\cdots.
$$
It is well known (cf.~\cite{Stanley}) that $|\mathfrak{A}_{2k-1}|$ is the {\em$k$-th tangent number} $T_k$, which appears in the Taylor expansion
$$
\tan(t)=\sum_{k\geq1}T_k\frac{t^{2k-1}}{(2k-1)!}=t+ 2\frac{t^3}{3!}+16\frac{t^5}{5!}+272\frac{t^7}{7!}+7936\frac{t^9}{9!}+\cdots.
$$
Setting $t=-1$ in Theorem~\ref{coro4} we recover the following result about {\em$q$-tangent numbers} due to Shin and Zeng~\cite[Theorem~3]{Shin-zeng1}.

\begin{cor}[Shin and Zeng]
For $n\geq1$, we have 
$$
\sum_{\sigma\in \S_n}(-1/q)^{\exc(\sigma)}q^{\inv(\sigma)}=
\begin{cases}
0\quad&\text{if $n$ is even,}\\
(-1)^{\frac{n-1}{2}}\sum\limits_{\sigma\in\mathfrak{A}_n}q^{2\,(2-13)\sigma+(31-2)\sigma}\quad&\text{if $n$ is odd.}
\end{cases}
$$
\end{cor}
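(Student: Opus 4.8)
The plan is to obtain the Corollary as the specialization $t=-1$ of Theorem~\ref{coro4}, after recognizing the left-hand side and identifying the permutation class that survives the specialization on the right.

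First I would rewrite the left-hand side: since $(-1/q)^{\exc(\sigma)}q^{\inv(\sigma)}=q^{\inv(\sigma)-\exc(\sigma)}(-1)^{\exc(\sigma)}$, the sum $\sum_{\sigma\in\S_n}(-1/q)^{\exc(\sigma)}q^{\inv(\sigma)}$ is precisely the polynomial $\sum_{\sigma\in\S_n}q^{\inv(\sigma)-\exc(\sigma)}t^{\exc(\sigma)}$ of Theorem~\ref{coro4} evaluated at $t=-1$. So I apply Theorem~\ref{coro4} and substitute $t=-1$ into the right-hand side.

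Next I would examine what survives in $\sum_{k=0}^{\lfloor(n-1)/2\rfloor}\bigl(\cdots\bigr)t^{k}(1+t)^{n-1-2k}$ at $t=-1$. The factor $(1+t)^{n-1-2k}$ is $0$ whenever $n-1-2k>0$ and equals $1$ exactly when $n-1-2k=0$. If $n$ is even then $n-1$ is odd, so $n-1-2k=0$ has no integer solution and every term vanishes, giving $0$; this settles the even case. If $n$ is odd, only the term $k=(n-1)/2$ remains, contributing $(-1)^{(n-1)/2}\sum_{\sigma\in DD_{n,(n-1)/2}}q^{2\,(2-13)\sigma+(31-2)\sigma}$.

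The only remaining point, and the place where a small argument is needed, is to show that $DD_{n,(n-1)/2}=\mathfrak{A}_n$ when $n$ is odd. A permutation $\sigma\in DD_{n,(n-1)/2}$ has exactly $(n-1)/2$ descents among the $n-1$ consecutive pairs $(\sigma(i),\sigma(i+1))$, no two of which are adjacent (no interior double descent), and, because the convention $\sigma(n+1)=0$ makes position $n$ a double descent as soon as $\sigma(n-1)>\sigma(n)$, its last pair must be an ascent. Hence the $(n-1)/2$ descents lie among the first $n-2$ pairs with no two adjacent; since a path on $n-2$ vertices has maximum independent set of size $\lceil(n-2)/2\rceil=(n-1)/2$ for $n$ odd, and this maximum is attained only by the odd-indexed pairs $1,3,\dots,n-2$, the descent set is forced and $\sigma$ is down-up. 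Conversely, every down-up permutation of odd length $n$ has descents exactly at $1,3,\dots,n-2$, hence $(n-1)/2$ of them and no double descents, so it lies in $DD_{n,(n-1)/2}$. Combining this identification with the previous paragraph completes the proof. I expect this last step --- handling the boundary convention at position $n$ and checking that the descent count forces the alternating pattern --- to be the main (though mild) obstacle; the rest is a direct substitution.
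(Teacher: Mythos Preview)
Your proof is correct and follows exactly the approach the paper indicates: substitute $t=-1$ in Theorem~\ref{coro4} and observe which term survives. The paper merely says ``Setting $t=-1$ in Theorem~\ref{coro4} we recover the following result,'' so your added verification that $DD_{n,(n-1)/2}=\mathfrak{A}_n$ for odd $n$ (using the boundary convention at position $n$ and the uniqueness of the maximum independent set in the path) is a welcome elaboration of a step the paper leaves to the reader.
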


In the same vein, setting $t=1$ in Theorem~\ref{gam:main} gives the following new interpretation of  the above $q$-tangent numbers. 
\begin{cor}
For $n\geq1$, we have 
$$
\sum_{\sigma\in \S_n}(-1/q)^{\exc(\sigma)}q^{\inv(\sigma)}=
\begin{cases}
0\quad&\text{if $n$ is even,}\\
(-1/q)^{\frac{n-1}{2}}\sum\limits_{\sigma\in DE_{n,(n-1)/2}}q^{\inv(\sigma)}\quad&\text{if $n$ is odd.}
\end{cases}
$$
\end{cor}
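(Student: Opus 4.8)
The plan is to derive this Corollary from Theorem~\ref{gam:main} by the same substitution $t=-1$ that turns Theorem~\ref{coro4} into the previous Corollary. First I would rewrite the left-hand side so that it matches the shape of Theorem~\ref{gam:main}: since
\[
(-1/q)^{\exc(\sigma)}q^{\inv(\sigma)}=(-1)^{\exc(\sigma)}q^{\inv(\sigma)-\exc(\sigma)},
\]
the sum $\sum_{\sigma\in\S_n}(-1/q)^{\exc(\sigma)}q^{\inv(\sigma)}$ is exactly the value at $t=-1$ of the polynomial $\sum_{\sigma\in\S_n}q^{\inv(\sigma)-\exc(\sigma)}t^{\exc(\sigma)}$ appearing on the left-hand side of Theorem~\ref{gam:main}.

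The second step is to put $t=-1$ into the right-hand side of Theorem~\ref{gam:main}. Every summand then picks up the factor $(1+(-1))^{n-1-2k}=0^{\,n-1-2k}$. Because $k$ runs over $0\le k\le\lfloor(n-1)/2\rfloor$, the exponent $n-1-2k$ is a nonnegative integer, and it equals $0$ precisely when $n$ is odd and $k=(n-1)/2$. Consequently, if $n$ is even then all the summands vanish and the total is $0$; if $n$ is odd then only the index $k=(n-1)/2$ contributes, leaving
\[
\sum_{\sigma\in\S_n}(-1/q)^{\exc(\sigma)}q^{\inv(\sigma)}=(-1)^{(n-1)/2}\sum_{\sigma\in DE_{n,(n-1)/2}}q^{\inv(\sigma)-\exc(\sigma)}.
\]
To finish I would invoke the definition of $DE_{n,k}$: every $\sigma\in DE_{n,(n-1)/2}$ satisfies $\exc(\sigma)=(n-1)/2$, so $q^{\inv(\sigma)-\exc(\sigma)}=q^{-(n-1)/2}q^{\inv(\sigma)}$, and factoring the scalar $(-1)^{(n-1)/2}q^{-(n-1)/2}=(-1/q)^{(n-1)/2}$ out of the sum produces the stated formula.

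Since the whole argument is just the evaluation at $t=-1$ of the already-proved Theorem~\ref{gam:main}, I do not expect a genuine obstacle; the only points deserving a moment's care are that $n-1-2k\ge 0$ holds throughout the range of summation, so that $0^{\,n-1-2k}$ is unambiguously $0$ or $1$, and that the parity bookkeeping correctly singles out $k=(n-1)/2$. It is worth noting in passing that equating this Corollary with the preceding one of Shin and Zeng yields, for odd $n$, the polynomial identity $\sum_{\sigma\in\mathfrak{A}_n}q^{2\,(2-13)\sigma+(31-2)\sigma}=q^{-(n-1)/2}\sum_{\sigma\in DE_{n,(n-1)/2}}q^{\inv(\sigma)}$, for which a direct bijective proof would be desirable.
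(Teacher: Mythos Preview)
Your argument is correct and is exactly the approach indicated in the paper: the corollary is obtained from Theorem~\ref{gam:main} by substituting $t=-1$ and simplifying (the paper's text ``setting $t=1$'' is evidently a misprint for $t=-1$). The extra bookkeeping you supply---checking $n-1-2k\ge 0$ and extracting the factor $q^{-(n-1)/2}$ using $\exc(\sigma)=(n-1)/2$ on $DE_{n,(n-1)/2}$---fills in the routine details left implicit there.
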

\begin{remark}
The set $DE_{2k+1,k}$ is a new combinatorial model for the tangent numbers. For instance,  
$DE_{5,2}$ consists of 16 permutations: 
\begin{align*}
&54231,54213,54123,54132,54312,54321,45231,45213,\\
&45123,45132,45312,45321,32541,32514,31524,31542.
\end{align*}
Although combining our bijection $\Phi$ and a modification of the Fran\c{c}on--Viennot bijection $\Psi$ (introduced in Section~\ref{sec:3}) will set up a link between $DE_{2k+1,k}$ and $\mathfrak{A}_{2k+1}$, no direct bijection between these two models is known. 
\end{remark}

 The rest of this paper is organized as follows. In Section~\ref{sec:2}, we construct the bijection $\Phi$ and prove Theorem~\ref{thm:main}. In Section~\ref{sec:3}, we introduce a simple group action on Laguerre histories and prove Theorems~\ref{coro4} and~\ref{gam:main}.

\section{The construction of $\Phi$}
\label{sec:2}

 In this section, we will construct the bijection $\Phi$ and prove Theorem~\ref{thm:main}. The following definition is important in constructing the bijection $\Phi$. 
 
\begin{definition}
For $k\in [n]$ and $\sigma\in\S_n$, the crossing index and nesting index on $k$ of  $\sigma$ are defined, respectively, by
\begin{align*}
  \cros_k(\sigma)&:=\#\{\ell\mid \sigma(k)<\ell\leq k<\sigma^{-1}(\ell)\,\,  \mbox{or}\, \, \ell<k< \sigma(\ell)<\sigma(k)\},\\
 \nest_k(\sigma)&:=\#\{\ell\mid \ell<\sigma(k)\leq k<\sigma^{-1}(\ell)\,\,  \mbox{or}\, \, \ell<k<\sigma(k)<\sigma(\ell)\}.
 \end{align*}
 Denote by  $\cros(\sigma)=\sum_{k=1}^{n}\cros_k(\sigma)$ (resp.~$\nest(\sigma)=\sum_{k=1}^{n}\nest_k(\sigma)$) the crossing (resp.~nesting) number of $\sigma$.
  \end{definition}
  
 We also need two vectors to keep track of the values and positions of  excedances. For a permutation $\sigma\in\S_n$, let
  $$
  \val(\sigma):=(v_1, v_2, \ldots, v_n)\quad\mbox{and}\quad\pos(\sigma):=(p_1, p_2, \ldots, p_n)
  $$
  where $v_i=\chi(i>\sigma^{-1}(i))$ and $p_i=\chi(\sigma(i)>i)$. Let 
  $$
  \Nes(\sigma):=(\nest_1(\sigma),\nest_2(\sigma),\ldots,\nest_{n}(\sigma))
  $$
be the vector that keeps  track of the nesting indices of $\sigma$. Define the mapping $\Phi(\sigma)=(w,\mu)$, where for $i\in[n-1]$,  $\mu_i=\nest_i(\sigma)$ and 
 $$
  w_i=\left\{
  \begin{array}{ll}
  U& \mbox{if} \,\, v_{i+1}=0 \,\,\mbox{and }\,\, p_{i}=1, \\
 D& \mbox{if} \,\, v_{i+1}=1 \,\,\mbox{and }\,\, p_{i}=0, \\
   L_1& \mbox{if} \,\, v_{i+1}=p_i=1, \\
  L_0& \mbox{if} \,\, v_{i+1}=p_i=0.
  \end{array}
  \right.
  $$
  \begin{example}\label{exam:8}
  Take $\sigma={\bf43}21{\bf89}765\in\S_9$, then $\val(\sigma)=(0,0,1,1,0,0,0,1,1)$, $\pos(\sigma)=(1,1,0,0,1,1,0,0,0)$ and $\Nes(\sigma)=(0,1,1,0,0,0,2,1,0)$. Thus, $\Phi(\sigma)=(w,\mu)\in\L_{8}$ with $w=UL_1DL_0UUDD$ and $\mu=(0,1,1,0,0,0,2,1)$. 
\end{example}

  We are going to prove that $\Phi$ is a bijection between $\S_n$ and $\L_{n-1}$ satisfying~\eqref{exc:inv}. The following lemma plays an essential role in proving $\Phi$ is a bijection.
  
\begin{lemma}\label{mainlem1}
Suppose that  $v=(v_1, v_2,\ldots, v_n)$ and  $p=(p_1, p_2, \ldots, p_n)$ are  two $0$-$1$ vectors with the same number of zeros , and  $\mu=(\mu_1, \mu_2, \ldots, \mu_{n})$ is a vector of nonnegative integers.  Then  $v=\val(\sigma)$, $p=\pos(\sigma)$ and $\mu=\Nes(\sigma)$ for a unique permutation $\sigma\in\S_n$ if and only if for each $k\in[n]$,
\begin{equation}\label{ineq}
0\leq \mu_k\leq\#\{\ell : v_{\ell}=0, \ell\leq k\} -\#\{\ell : p_{\ell}=0, \ell<k\}-1.
\end{equation}
\end{lemma}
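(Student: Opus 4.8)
The plan is to prove Lemma~\ref{mainlem1} by a bijective reconstruction argument: given the data $(v,p,\mu)$ satisfying~\eqref{ineq}, I would build the unique candidate permutation $\sigma$ step by step, reading off its two-line notation from left to right, and simultaneously verify that the inequalities~\eqref{ineq} are exactly the feasibility conditions for this construction to succeed. The key observation is that $p_i=\chi(\sigma(i)>i)$ tells us whether position $i$ is an excedance position, and $v_i=\chi(i>\sigma^{-1}(i))$ tells us whether value $i$ is an excedance value (i.e. whether $i$ sits at an excedance position in the one-line word); the condition that $v$ and $p$ have the same number of zeros is precisely the statement that the number of non-excedance values equals the number of non-excedance positions, which is automatic for any permutation (a weak excedance value/position count).

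First I would reformulate the problem in the language of partial matchings. Following the Foata--Zeilberger circle, one encodes $\sigma$ by its diagram of arcs on the vertices $1,2,\dots,n$: draw an arc $i\to\sigma(i)$; excedance positions open an upper arc, deficiency positions close one, fixed points are isolated, and the quantity $\nest_k(\sigma)$ counts the arcs nesting over the arc incident to $k$ on the appropriate side. The vectors $v$ and $p$ then determine, for each vertex $k$, which of the four ``types'' (opener, closer, double point, isolated-ish) it is, and the only freedom left is \emph{which} already-open arc each closer connects to and \emph{where} each opener's arc will eventually close relative to the others — this freedom is parametrized by an integer in an interval whose length is the current number of available ``slots''. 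I would show that when processing vertex $k$ from left to right, the number of open arcs available to be nested by (or to nest) the arc at $k$ equals
$$
\#\{\ell : v_{\ell}=0,\ \ell\leq k\}-\#\{\ell : p_{\ell}=0,\ \ell<k\},
$$
so that a valid choice of $\nest_k$ exists iff $\mu_k$ lies in the interval~\eqref{ineq}, and the choice is then unique given $\mu_k$. Conversely, any genuine $\sigma$ produces data satisfying~\eqref{ineq} because $\nest_k(\sigma)$ cannot exceed the number of arcs it could possibly nest with.

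The main obstacle, I expect, is bookkeeping the running count of ``open slots'' correctly: one must check that each opener increments the pool by one, each closer decrements it by one, isolated vertices leave it unchanged, and — crucially — that the asymmetry between ``$\ell\le k$'' for $v$ and ``$\ell<k$'' for $p$ in~\eqref{ineq} matches the order in which vertex $k$ first contributes a slot (as a value) versus consumes one (as a position). The cleanest way to handle this is to set up an explicit algorithm: maintain a list of currently open arcs sorted by the eventual order of their right endpoints (or by nesting depth), and when reaching $k$, insert/remove/skip according to $(v_k,p_k)$ with $\mu_k$ dictating the insertion depth; then an induction on $k$ shows the list has the claimed size and that distinct $(v,p,\mu)$ give distinct $\sigma$ and every $\sigma$ arises. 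I would present the forward direction (that $\sigma$ exists and is unique under~\eqref{ineq}) via this reconstruction, and the reverse direction (that~\eqref{ineq} is necessary) by directly bounding $\nest_k(\sigma)$ using the definitions of $\cros_k,\nest_k$ together with the fact that $\cros_k(\sigma)+\nest_k(\sigma)$ equals that same slot count minus one.
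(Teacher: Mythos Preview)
Your plan is sound and would yield a valid proof, but it differs from the paper's argument in one structural respect. The paper does \emph{not} run a single left-to-right sweep maintaining open arcs. Instead it decouples the problem into two independent passes: first it processes the non-excedance positions $\{k:p_k=0\}$ in increasing order, at each step assigning to position $k$ the $(\mu_k+1)$-th smallest value still remaining in $\{i:v_i=0\}$; then it processes the excedance positions $\{k:p_k=1\}$ in \emph{decreasing} order, at each step assigning the $(\mu_k+1)$-th largest remaining value from $\{i:v_i=1\}$. Inequality~\eqref{ineq} (together with the identity $\tilde d_k(v,p)=d_k(v,p)$ when $p_k=1$) guarantees feasibility and forces uniqueness at every step; the necessity direction is handled by the direct two-case bound you describe, without invoking $\cros_k$.

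Your Foata--Zeilberger-style single sweep buys a more conceptual picture: the running ``slot count'' is literally the Motzkin-path height, and your observation that $\cros_k(\sigma)+\nest_k(\sigma)$ equals that count minus one is precisely the paper's claim~\eqref{claim}, which it proves separately when establishing Theorem~\ref{thm:main}. The paper's two-pass algorithm buys simplicity: since the two clauses in the definition of $\nest_k$ never mix excedance and non-excedance data, treating the two halves of the permutation separately sidesteps exactly the bookkeeping you flag (ordering open upper arcs by their not-yet-determined right endpoints). Both routes are correct; the paper's is shorter to execute, while yours foreshadows the link to Laguerre-history heights from the start.
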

\begin{proof}
For convenience, we set 
$$
d_k(v,p):=\#\{\ell : v_{\ell}=0, \ell\leq k\} -\#\{\ell : p_{\ell}=0, \ell<k\}
$$
and 
$$
\tilde{d}_k(v,p):=\#\{\ell : v_{\ell}=1, \ell>k\} -\#\{\ell : p_{\ell}=1, \ell>k\}.
$$
Since the $0$-$1$ vectors $v$ and $p$ has the same number of zeros, we have 
$$
\tilde{d}_k(v,p)=\#\{\ell : v_{\ell}=0, \ell\leq k\}-\#\{\ell : p_{\ell}=0, \ell\leq k\}.
$$
It then follows that 
\begin{equation}\label{eq:pk1}
\tilde{d}_k(v,p)=d_k(v,p)\quad\text{whenever $p_k=1$}. 
\end{equation}

First we prove the ``only if'' side. We distinguish two cases:
\begin{itemize}
\item If $p_k=0$, that is $\sigma(k)\leq k$, then  $d_k(v,p)$ counts the number of indices  $\ell$ such that $\ell\leq k$ and $\sigma^{-1}(\ell)\geq k$. Thus,  $d_k(v,p)\geq \nest_k(\sigma)+1$ according to the definition of $\nest_k(\sigma)$.
\item Otherwise, we have $p_k=1$ and in this case $\tilde{d}_k(v,p)$ counts the number of indices $\ell$ such that $\ell\leq k$ and $\sigma(\ell)>k$. It follows from the definition of $\nest_k(\sigma)$ that $\tilde{d}_k(v,p)\geq \nest_k(\sigma)+1$.
\end{itemize}
In view of~\eqref{eq:pk1}, we get  $d_k(v,p)\geq \nest_k(\sigma)+1$ in either case. 

It remains to prove the ``if'' side of the lemma. Given two  $0$-$1$ vectors $v,p$ and a vector $\mu$ satisfying~\eqref{ineq}, we will construct the unique permutation $\sigma\in\S_n$ such that $v=\val(\sigma)$, $p=\pos(\sigma)$ and $\mu=\Nes(\sigma)$.  The value of $\sigma(k)$ for $p_k=0$ are determined by the following steps:
\begin{itemize}
\item[(1)] Set $P=\{i\in[n]:p_i=0\}$ and $V=\{i\in[n]: v_i=0\}$;
\item[(2)] let $k\leftarrow \min(P)$ (this means that $\min(P)$ is assigned to $k$) and find index $j$ such that $j$ is the $(\mu_k+1)$-th smallest value in $V$;
\item[(3)] set $\sigma(k)=j$, $P\leftarrow P\setminus\{k\}$ and $V\leftarrow V\setminus\{j\}$; go to step (2) if $P$ is not empty. 
\end{itemize} 
In step (2) of the above algorithm, since $\mu_k+1\leq d_k(v,p)$, $j$ must exist and $j\leq k$. In order to have $\nest_k(\sigma)=\mu_k$  the value of $\sigma(k)$  must be $j$. Take $v=(0,0,1,1,0,0,0,1,1)$, $p=(1,1,0,0,1,1,0,0,0)$ and $\mu=(0,1,1,0,0,0,2,1,0)$ for example, the above algorithm determines $\sigma(3)=2$, $\sigma(4)=1$, $\sigma(7)=7$, $\sigma(8)=6$ and $\sigma(9)=5$, successively. Similarly, the value of $\sigma(k)$ for $p_k=1$ are determined by the following steps:
\begin{itemize}
\item[(a)] Set $P=\{i\in[n]:p_i=1\}$ and $V=\{i\in[n]: v_i=1\}$;
\item[(b)] let $k\leftarrow \max(P)$ and find index $j$ such that $j$ is the $(\mu_k+1)$-th greatest  value in $V$;
\item[(c)] set $\sigma(k)=j$, $P\leftarrow P\setminus\{k\}$ and $V\leftarrow V\setminus\{j\}$; go to step (b) if $P$ is not empty. 
\end{itemize} 
In step (b) of the above algorithm, since $\mu_k+1\leq d_k(v,p)=\tilde{d}_k(v,p)$ (in view of~\eqref{eq:pk1}), $j$ must exist and $j>k$. In order to have $\nest_k(\sigma)=\mu_k$  the value of $\sigma(k)$ must be $j$. Continuing with the above running example, we determine $\sigma(6)=9$, $\sigma(5)=8$, $\sigma(2)=3$ and $\sigma(1)=4$, successively. Finally, the permutation $\sigma$ constructed by the above two algorithms is $432189765$, which coincides with the one in Example~\ref{exam:8}. The proof of the ``if'' side is complete.
\end{proof}

Now, we are ready to prove Theorem~\ref{thm:main}.
 \begin{proof}[{\bf Proof of Theorem~\ref{thm:main}}] It follows from the construction of $\Phi$ that Laguerre histories of length $n-1$ are in bijection with the triples $(v,p,\mu)$, where $v=(v_1, v_2,\ldots, v_n)$ and  $p=(p_1, p_2, \ldots, p_n)$ are  two $0$-$1$ vectors with the same number of zeros , and  $\mu=(\mu_1, \mu_2, \ldots, \mu_{n})$ is a vector satisfying~\eqref{ineq}.  The latter objects are in bijection with $\S_n$ by Lemma~\ref{mainlem1} and so $\Phi$ is a bijection between $\S_n$ and $\L_{n-1}$. 
 
 Next we show that $\Phi$ has the required properties~\eqref{exc:inv}. The first equality of~\eqref{exc:inv} is clear from the definition of $\Phi$. For the second equality of~\eqref{exc:inv}, we claim that 
 \begin{equation}\label{claim}
 h_i(w)=\cros_i(\sigma)+\nest_i(\sigma).
 \end{equation}
 Invoking the relationship (see \cite{Shin-zeng1}, Eq.~(40)) 
 $$
 \inv(\sigma)=\exc(\sigma)+\cros(\sigma)+2\nest(\sigma)
 $$
 and the simple facts 
 $$
\area(w)=\sum_{i=1}^{n-1}h_{i}(w), \qquad\nest(\sigma)=\sum_{i=1}^n\mu_i
$$
we see immediately that claim~\eqref{claim} implies 
$$\inv(\sigma)-\exc(\sigma)=\area(w)+\sum_{i=1}^n\mu_i,$$
as desired. 

It remains to show the claim~\eqref{claim}. 
 We proceed the proof by considering the following two cases.
 \begin{itemize}
 \item Case 1: $w_i=D$ or $L_0$. In this case, we have $p_{i}=0$ and 
  \begin{align*}
  \cros_i(\sigma)+\nest_i(\sigma)&=\#\{\ell:  \ell\leq  i<\sigma^{-1}(\ell)\} \\
  &= \#\{\ell: \ell\leq i, v_{\ell}=0\}-\#\{\ell\mid \ell\leq i, p_{\ell}=0\}\\
  &= \#\{\ell: 2\leq \ell\leq i, v_{\ell}=0\}-\#\{\ell: \ell<i, p_{\ell}=0\}\\
  &= \#\{\ell: \ell<i, w_{\ell}=U\}-\#\{\ell: \ell<i, w_{\ell}=D\}\\
  &=h_i(w).
  \end{align*}

  \item Case 2: $w_i=U$ or $L_1$. In this case, we have $p_i=1$ and
 \begin{align*}
  \cros_i(\sigma)+\nest_i(\sigma)&=\#\{\ell: \ell< i< \sigma(\ell)\} \\
  &= \#\{\ell: \ell<i, p_{\ell}=1\}-\#\{\ell: \ell\leq i, v_{\ell}=1\}\\
  &= \#\{\ell: \ell< i, w_{\ell}=U\}-\#\{\ell: \ell<i, w_{\ell}=D\}\\
  &=h_i(w).
 \end{align*}
   \end{itemize}
 Hence we have deduced~\eqref{claim}, which ends the proof of Theorem~\ref{thm:main}.
 \end{proof} 
   
\section{Applications of $\Phi$}%
\label{sec:3}

 Lin~\cite{Lin1} introduced a group action on $2$-Motzkin paths in the sprit of the Foata--Strehl action on permutations. Here we generalize it to Laguerre histories. Let $i\in [n]$ and $(w,\mu)\in \L_n$. If the $i$-th step of $w$ is level, then let $\varphi_i(w,\mu)=(w', \mu)$, where $w'$ is the   2-Motzkin path  obtained from $w$ by changing the label of the $i$-th step. Otherwise, define $\varphi_i(w,\mu)=(w, \mu)$. For any subset $S\subseteq [n]$ define the function $\phi_{S}: \L_n\rightarrow \L_n$ by $\varphi_S(w,\mu)=\prod _{i\in S} \varphi_i(w,\mu)$. Hence the group $\mathbb{Z}_2^n$ acts on $\L_n$ via the function $\varphi_S$.

 This action divides the set $\L_n$ into disjoint orbits and each orbit has a unique Laguerre history which has all its level steps labelled by $L_0$. Let us introduce 
 $$
 \mathcal{O}_{n,k}=\{(w,\mu)\in \L_n: \text{ all level steps of $w$ are $L_0$},  \#U(w)=k\}.
 $$
 It then follows from this action that 
 \begin{equation}\label{action}
 \sum_{(w,\mu)\in\L_n}t^{\#U(w)+\#L_1(w)}q^{\area(w)+\sum_i\mu_i}=\sum_{k=0}^{\lfloor\frac{n}{2}\rfloor}\biggl(\sum_{(w,\mu)\in \mathcal{O}_{n,k}}q^{\area(w)+\sum_i\mu_i}\biggr)t^k(1+t)^{n-2k}.
 \end{equation}
 
 We are now in position to prove Theorem~\ref{gam:main}.

 \begin{proof}[{\bf Proof of Theorem~\ref{gam:main}}] It is clear from the construction of $\Phi$ that a permutation $\sigma\in\S_n$ has no shifted double excedances if and only if $w$ has no $L_1$ step, where $(w,\mu)=\Phi(\sigma)$. Theorem~\ref{gam:main} then follows from Theorem~\ref{thm:main} and expansion~\eqref{action}.
 \end{proof}
 
 For the proof of Theorem~\ref{coro4}, we still need a modified version of the Francon--Viennot bijection (cf.~\cite{Shin-zeng2012}) $\Psi:\S_n\rightarrow\L_{n-1}$ that we recall next.  For $\sigma\in\S_n$, define 
the refinements of two generalized patterns by
\begin{align*}
\ppp_k\sigma&=\#\{i: i+1<j\text{ and } \sigma(i+1)<\sigma(j)=k<\sigma(i)\},\\
\pp_k\sigma&=\#\{i: i-1>j\text{ and } \sigma(i-1)<\sigma(j)=k<\sigma(i)\}.
\end{align*}
Let us use the assumption $\sigma(0)=\sigma(n+1)=0$ and introduce $\Psi(\sigma)=(w,\mu)\in\L_{n-1}$, where for each $i\in[n-1]$: 

  $$
  w_i=\left\{
  \begin{array}{ll}
  U& \mbox{if} \,\, \sigma(j-1)>\sigma(j)=i<\sigma(j+1),  \\
 D&\mbox{if} \,\, \sigma(j-1)<\sigma(j)=i>\sigma(j+1),  \\
   L_1&\mbox{if} \,\, \sigma(j-1)>\sigma(j)=i>\sigma(j+1),
    \\
  L_0 &\mbox{if} \,\,\sigma(j-1)<\sigma(j)=i<\sigma(j+1),
  \end{array}
  \right.
  $$
  and
$\mu_i=\pp_i(\sigma)$.
The bijection $\Psi$ has the following features
\begin{equation}\label{des}
 \DES(\sigma)=U(w)\uplus L_1(w)\quad\text{and}\quad 2\pp\sigma+\ppp\sigma=\area(w)+\sum_{i=1}^{n-1}\mu_i,
 \end{equation}
 where $\DES(\sigma)$ is the set of descents of $\sigma$.
 
 \begin{proof}[{\bf Proof of Theorem~\ref{coro4}}] It is clear from the construction of $\Psi$ that a permutation $\sigma\in\S_n$ has no double descents if and only if $w$ has no $L_1$ step, where $(w,\mu)=\Psi(\sigma)$.   Applying $\Psi$ to both sides of~\eqref{action} and using~\eqref{des} yields 
 \begin{equation}\label{gam:des}
\sum_{\sigma\in \S_n}t^{\des(\sigma)}q^{2\,(2-13)\sigma+(31-2)\sigma}=\sum_{k=0}^{\lfloor{n-1\over 2}\rfloor}\biggl(\sum_{\sigma\in DD_{n,k}}q^{2\,(2-13)\sigma+(31-2)\sigma}\biggr)t^{k}(1+t)^{n-1-2k}.
\end{equation}
 On the other hand, Theorem~\ref{thm:main} together with the properties~\eqref{des} of $\Psi$ gives
 \begin{equation}\label{exc:des}
 \sum_{\sigma\in \S_n}t^{\exc(\sigma)}q^{\inv(\sigma)-\exc(\sigma)}=\sum_{\sigma\in \S_n}t^{\des(\sigma)}q^{2\,(2-13)\sigma+(31-2)\sigma}.
 \end{equation}
 Theorem~\ref{coro4} then follows by combining~\eqref{gam:des} and~\eqref{exc:des}.
 \end{proof}

\section*{Acknowledgments}
 This work was supported by the National Science Foundation of China grants 11671366, 11871247 and 11501244, and the Training Program Foundation for Distinguished Young Research Talents of Fujian Higher Education.


\begin{thebibliography}{99}

\bibitem{Ath} C.A. Athanasiadis, Gamma-positivity in combinatorics and geometry, S\'em. Lothar. Combin. 77 (2018), Article B77i, 64pp (electronic).
 
 \bibitem{ceks} S.-E. Cheng, S. Elizalde, A. Kasraoui and B.E. Sagan, Inversion polynomials for $321$-avoiding permutations, Discrete Math., {\bf313} (2013), 2552--2565.
 
 \bibitem{csz} R.J. Clarke, E. Steingr\'imsson, and J. Zeng, New Euler-Mahonian statistics on permutations and words, Adv. in Appl. Math., \textbf{18} (1997), 237--270.
 
 \bibitem{fsc0}D. Foata and M.-P. Sch\"uzenberger, {\em Th\'eorie G\'eom\'etrique des Polyn\^omes Eul\'eriens},  Lecture Notes in Mathematics, Vol. 138, Springer-Verlag, Berlin, 1970.


\bibitem{Lin1}
Z. Lin,   On $\gamma$-positive polynomials arising in pattern avoidance, Adv. in Appl. Math., {\bf 82} (2017), 1--22.

\bibitem{lin-fu} Z. Lin and S. Fu, On $1212$-avoiding restricted growth functions, Electron. J. Combin., {\bf 24(1)} (2017), \#P1.53.

\bibitem{Petersen2015}
T.K. Petersen,  {\em Eulerian Numbers}. With a foreword by Richard Stanley.  Birkh\"auser Advanced Texts: Basler Lehrb\"ucher. Birkh\"auser/Springer, New York, 2015.

\bibitem{Shin-zeng1}
H. Shin, J. Zeng,  The $q$-tangent and $q$-secant numbers via continued fractions, European J. Combin.,   {\bf 31} (2010), 1689--1705.

\bibitem{Shin-zeng2012}
H. Shin, J. Zeng, The symmetric and unimodal expansion of Eulerian polynomials via continued fractions, European J. Combin., {\bf 33} (2012), 111--127.

\bibitem{Shin-zeng2016}
H. Shin, J. Zeng,   Symmetric unimodal expansion of excedances in colored permutations, European J. Combin.,   {\bf 52} (2016), 174--196.

\bibitem{st0} R.P. Stanley,  {\em Enumerative Combinatorics Vol. 1}, Cambridge Studies in Advanced Mathematics 49, Cambridge University Press, Cambridge,  1997.

\bibitem{Stanley}
R.P. Stanley,  A Survey of Alternating Permutations, in Combinatorics
and Graphs, R.A. Brualdi et. al. (eds.), Contemp. Math., Vol. 531,
 Amer. Math. Soc., Providence, RI, 2010, pp.~165--196.


\end{thebibliography}
 \end{document}